\newtheorem{satz}{Theorem}
\newtheorem{theorem}[satz]{Theorem}
\newtheorem{lemma}[satz]{Lemma}
\newtheorem{remark}[satz]{Remark}
\newtheorem{example}[satz]{Example}
\def\L{\Lambda}
\def\Z{\mathbb {Z}}
\def\R{\mathbb {R}}
\def\F{\mathbb {F}}
\def\E{\mathsf{E}}
\def\C{\mathbb{C}}
\def\d{\delta}
\def\o{\omega}
\def\({\big (}
\def\){\big )}
\def\le{\leqslant}
\def\ge{\geqslant}
\def\_phi{\varphi}
\def\eps{\varepsilon}
\def\Gr{{\mathbf G}}
\def\FF{\widehat}
\def\ov{\overline}
\def\D{\Delta}
\def\supp{\mathsf{supp}}
\def\T{\mathsf{T}}
\def\C{\mathbb{C}}
\newcommand{\bp}{\bigskip}
\author{I.D. Shkredov}
\title{
Uncertainty for convolutions of sets
}
\date{}
\begin{document}
	\maketitle

%\begin{document}

\begin{center}
	Annotation.
\end{center}

{\it \small
    The paper obtains the optimal form of the uncertainty principle in the special case of convolution of sets.
%    In the paper we obtain an optimal form of the uncertainty principle in the special case of the convolution of sets. 
}
\\

\section{Introduction}

Let $\Gr$ be an
%finite 
abelian group and  $\FF{\Gr}$  its dual group.
For any function $f:\Gr \to \mathbb{C}$ and $\xi \in \FF{\Gr}$ define the Fourier transform of $f$ at $\xi$ by the formula 
\begin{equation}\label{f:Fourier_representations}
\FF{f} (\xi) = \sum_{g\in \Gr} f(g) \ov{\xi (g)} \,.
\end{equation}
A 
%series 
number 
of uncertainty principles on $\Gr$ assert, roughly, that a function on $\Gr$ and its Fourier transform cannot be simultaneously highly concentrated. 
This topic is quite  popular 
%last time 
see, e.g., papers \cite{alagic2008uncertainty}, \cite{biro2021uncertainty}, \cite{meshulam2006uncertainty},
\cite{Tao_uncertainty}. 
%One example of this is the well-known Heisenberg uncertainty princi- ple concerning position and momentum wavefunctions in quantum physics. 
%Several uncertainty principles have also been formulated for functions from a finite abelian group G to the complex numbers C.
The most basic example is an inequality which connects the sizes of the supports of $f$ and $\FF{f}$ to the size of the finite group $\Gr$, namely, 
\begin{equation}\label{f:uncertainty_intr}
    |\supp f| \cdot |\supp \FF{f}| \ge |\Gr| \,,
\end{equation}
and, of course, it is worth mentioning the classical Heisenberg inequality for $\Gr = \R$, which  states  that for any $a,b\in \R$ one has 
\begin{equation}\label{f:uncertainty_Heisenberg_intr}
    \int_{\R} (x-a)^2 |f(x)|^2\, dx \cdot \int_{\R} (\xi-b)^2 |\FF{f} (\xi)|^2\, d\xi \ge \frac{\|f\|^4_2}{16\pi^2} \,.
\end{equation}
For more background on the Fourier transform on abelian groups and the uncertainty principle we refer to \cite{terras1999fourier} and the excellent surveys 
\cite{folland1997uncertainty} and \cite{wigderson2021uncertainty}.

This paper considers the case when the function $f$ has a special form, namely, $f$ is the convolution of the characteristic function of a set $A \subseteq \Gr$ (all required definitions can be found in Section  \ref{sec:def}). 
This allows us to obtain a new optimal form of the uncertainty principle in terms of the difference set $A-A$ (we also consider the asymmetric case, see Theorem \ref{t:rho,M} below).
%improve \eqref{f:uncertainty_Heisenberg_intr}}
%Recall that 
Given two sets $A,B\subseteq \Gr$, define  
%the \textit{product set} (
the {\it sumset} 
%in the abelian case) 
of $A$ and $B$ as 
$$A+B:=\{a+b ~:~ a\in{A},\,b\in{B}\}\,.$$
In a similar way we define the {\it difference sets} and the {\it higher sumsets}, e.g., $2A-A$ is $A+A-A$.
Consider two quantities 
\begin{equation}\label{def:rho}
    \rho (A) := \max_{x\neq 0} |A \cap (A+x)| \,,
\end{equation}
and 
\begin{equation}\label{def:M(A)}
    M (A) := \max_{\xi\neq 0} |\FF{A} (\xi)| \,,
\end{equation}
where we use the same capital letter to denote a set $A\subseteq \Gr$ and   its characteristic function $A: \Gr \to \{0,1 \}$.

\begin{theorem}
     Let $\Gr$ be a finite abelian group, $A\subseteq \Gr$, $|A|=\d |\Gr|$,  and let $|A-A| = K|A|$. 
     Suppose that  $|A| \to \infty$, $\log^2 K = o(\log |A|)$ and 
\begin{equation}\label{cond:Kd}
    K^2 \delta = o(1)\,.
\end{equation}
    Then  
\begin{equation}\label{f:rho,M_cor}
     M^2 (A) \rho (A) \ge \frac{|A|^3}{K} \cdot (1-o(1)) \,.
\end{equation}
\label{t:rho,M_cor}
\end{theorem}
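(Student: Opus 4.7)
The plan is to view the statement as an instance of the asymmetric uncertainty principle for convolutions (Theorem \ref{t:rho,M}) applied with $B = A$, and to verify that under the hypotheses the error terms are absorbed into the factor $(1-o(1))$. Set $F := A * A^{-}$, so that $F(0) = |A|$, $\supp F = A - A$ (of size $K|A|$), $\sum_x F(x) = |A|^2$, $F(x) \le \rho(A)$ for $x \ne 0$, and on the Fourier side $\FF{F}(\xi) = |\FF{A}(\xi)|^2$ with $\FF{F}(0) = |A|^2$ and $\FF{F}(\xi) \le M(A)^2$ for $\xi \ne 0$.

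The physical-side lower bound
$$E_2(A) := \sum_x F(x)^2 \ge \frac{|A|^4}{|A-A|} = \frac{|A|^3}{K}$$
is immediate from Cauchy--Schwarz on the support $A-A$. Parseval gives
$$E_2(A) = \delta |A|^3 + \frac{1}{|\Gr|} \sum_{\xi \ne 0} |\FF{A}(\xi)|^4,$$
and the hypothesis $K^2\delta = o(1)$ forces $\delta|A|^3 = o(|A|^3/K)$, so the nontrivial Fourier sum inherits the full lower bound. The theorem will follow once a matching upper bound of the form $\sum_{\xi \ne 0} |\FF{A}(\xi)|^4 \le |\Gr| \cdot M(A)^2 \rho(A) (1+o(1))$ is established.

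The main step, and principal obstacle, is producing this upper bound. A naive split uses $\|F - |A|\delta_0\|_\infty = \rho$ on the physical side and $\||\FF{A}|^2 - |A|^2 \mathbf{1}_{\xi=0}\|_\infty \le M^2$ on the Fourier side, applied \emph{independently} (for instance via $\|g\|_2^2 \le \|g\|_\infty \|g\|_1$ on each side), but this only yields the weaker conclusion $M^2 \rho \ge |A|^3/K^2$, off by a factor of $K$. Recovering the missing factor must exploit the small-doubling structure of $A$; the hypothesis $\log^2 K = o(\log |A|)$ is precisely what allows Pl\"unnecke--Ruzsa or Chang-type control of iterated sumsets $nA - mA$ and of the spectrum of $A$ with multiplicative losses of order $K^{O(1)} = |A|^{o(1)}$, hence absorbable into $1-o(1)$. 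The asymmetric Theorem \ref{t:rho,M} presumably carries out this coupling of the two $L^\infty$ bounds through such a spectral/combinatorial bridge (e.g.\ a dyadic decomposition of $F$ paired with a H\"older inequality across the physical/Fourier duality, with Pl\"unnecke applied to control the cross terms).

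The corollary then follows by invoking Theorem \ref{t:rho,M} with $B=A$, substituting the relations $|A-A| = K|A|$ and $|A| = \delta|\Gr|$, and verifying that every error term is of order $o(|A|^3/K)$ under the two asymptotic hypotheses $K^2\delta = o(1)$ and $\log^2 K = o(\log|A|)$. The bookkeeping for the $(1-o(1))$ factor is routine once the main inequality from Theorem \ref{t:rho,M} is in hand.
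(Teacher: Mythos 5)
Your overall route — deduce the theorem from the asymmetric Theorem \ref{t:rho,M} and check that the error terms are $o(1)$ — is exactly the paper's route. But your substitution is wrong in a way that breaks the deduction: you take $B=A$, and the main term of Theorem \ref{t:rho,M} is $|A|^2|B|^2/|A+B|$, which with $B=A$ becomes $|A|^4/|A+A|$, not $|A|^4/|A-A|=|A|^3/K$. Since only $|A+A|\le K^2|A|$ is guaranteed (Ruzsa), this can fall short of $|A|^3/K$ by a factor as large as $K$; moreover with $B=A$ one has $\o=|A+A|/|A-A|$, so the error term $(\o K)^2\d$ can be as large as $K^4\d$, which is not controlled by the hypothesis $K^2\d=o(1)$, and the requirement $|A|\ge(2K\max\{1,\o\})^8$ also worsens. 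The correct choice is $B=-A$: then $A+B=A-A$, $\o=1$, $\zeta=1$, $K_*=K$, $M(-A)=M(A)$, and the bound reads $M^2(A)\rho(A)\ge\frac{|A|^3}{K}\bigl(1-\frac{12\log^2K}{\log|A|}-K^2\d\bigr)$, whose error terms are $o(1)$ precisely under your two hypotheses (and $|A|\ge(2K)^8$ holds eventually since $\log^2K=o(\log|A|)$). This is a one-line fix, but as written the proof does not give the stated bound.

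Two further remarks. First, your intermediate reformulation — lower-bounding $\E(A)$ by $|A|^4/|A-A|$ via Cauchy--Schwarz, passing to the Fourier side by Parseval, and then seeking $\frac1N\sum_{\xi\ne0}|\FF{A}(\xi)|^4\le M^2\rho(1+o(1))$ — is not needed once Theorem \ref{t:rho,M} is invoked, and that upper bound is a genuine strengthening of both natural estimates ($M^2|A|$ and roughly $\rho|A|^2$); you correctly note your naive split only yields $|A|^3/K^2$. Second, your guess at the internal mechanism of Theorem \ref{t:rho,M} (Pl\"unnecke--Ruzsa/Chang-type spectral control, dyadic decomposition of $F$) is not what the paper does: the extra factor of $K$ is recovered by working with higher difference sets $A^{n-1}-\D_{n-1}(A)$ and higher energies $\E_n(A)$, the generalized triangle inequality (Lemma \ref{l:gen_triangle_S}), and the Katz--Koester inclusion, with the normalizing quantities $\eps_{n-1}$ cancelling between the lower bounds for $\rho$ and for $M^2$ when $m=n\approx\frac12\log_K|A|$. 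Since you only cite Theorem \ref{t:rho,M} as a black box this speculation is harmless, but it should not be presented as an account of the proof.
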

In other words, if $f$ is the convolution of the characteristic function of a set $A \subseteq \Gr$, then 
\begin{equation}\label{f:rho,M_cor_via_f}
    \max_{x\neq 0} |f(x)| \cdot \max_{\xi\neq 0} |\FF{f}(\xi)| \ge  \frac{|A|^{3}}{K} \cdot (1-o(1)) \,.
\end{equation}
It is easy to see that bound \eqref{f:rho,M_cor} is tight (also, see Remark \ref{r:H+L_energy} below). 
Indeed,  take $\Gr = \F_2^n$, let $H$ be a proper subspace of $\Gr$ and let $A\subseteq H$ be a random set of positive density. Then with high probability 
$$K|A|:= |A-A| = |H| (1-o(1)),\, \quad \quad  \rho (A) \le \frac{|A|}{K} \cdot (1-o(1))$$ and $\E(A,H)= |A|^2 |H|$. 
%It follows that 
The last 
%formula 
identity 
implies $M(A) = |A| (1-o(1))$ 
and the same 
%bound 
asymptotic formula 
follows from  our bound \eqref{f:rho,M_cor}. 
As for condition \eqref{cond:Kd}, we definitely need that  $K\d = o(1)$ (consider the case of a random set $A$).
%Nevertheless, condition \eqref{cond:Kd} should be probably replaced by $K\d = o(1)$. 
Finally, note that in our proof we make extensive use of the form of the function $f$ (for example, we widely use of the concept of higher sumsets/higher energies, see \cite{SS_higher}) 
and it seems like  any reasonable analogue of \eqref{f:rho,M_cor_via_f} for general $f$ 
%does not work, 
fails, 
see  
Remark \ref{r:non_general} below.

\bp

We thank Vsevolod F. Lev for useful  discussions and valuable comments.

%We thank Jozsef Solymosi  for useful  discussions. 
%and remarks. 
%Also, we thank the Reviewers for valuable comments.

\section{Definitions and notation}
\label{sec:def}

Let $\Gr$ be a finite abelian group and we denote the cardinality of $\Gr$ by $N$. 
Given a set $A\subseteq \Gr$ and a positive integer $k$, we put 
$$
    \Delta_k (A) := \{ (a,a, \dots, a) ~:~ a\in A \} \subseteq \Gr^k \,.
$$
%We also put 
Also, let 
$\Delta_k (x) := \Delta_k (\{ x \})$, $x\in \Gr$.
Now we have 
\begin{equation}\label{def:A-A_intersection}
    A-A := \{ a-b ~:~ a,b\in A\} = \{ s\in \Gr ~:~ A\cap (A+s) \neq \emptyset \} \,.
\end{equation}
%Let $k$ be a positive integer. 
A natural generalization of the last formula in \eqref{def:A-A_intersection} 
is the set 
\begin{equation}\label{def:A-A_m_intersection}
    \{ (x_1, \dots, x_k) \in \Gr^k ~:~ A\cap (A+x_1) \cap \dots \cap (A+x_k) \neq \emptyset \} 
    =
    A^k - \Delta_k (A) \,,
\end{equation}
which is called the {\it higher difference set} (see \cite{SS_higher}). 
Given a positive integer $k$,  we put as in \cite{LS_popularity}   
\[
    R^{(k+1)}_A (x_1,\dots, x_k) = |A\cap (A+x_1) \cap \dots \cap (A+x_k)| \,, \quad \quad  x_1,\dots, x_k \in \Gr \,.
\]
Thus, $(x_1, \dots, x_k) \in A^k - \Delta_k (A)$ iff $R^{(k+1)}_A (x_1,\dots, x_k) >0$. 
For any two sets $A,B \subseteq \Gr$ the {\it additive energy} of $A$ and $B$ is defined by
$$
\E (A,B) = \E^{} (A,B) = |\{ (a_1,a_2,b_1,b_2) \in A\times A \times B \times B ~:~ a_1 - b^{}_1 = a_2 - b^{}_2 \}| \,.
$$
If $A=B$, then  we simply write $\E^{} (A)$ for $\E^{} (A,A)$.
Similarly, 
\begin{equation}\label{def:T_k_intr}
    \T_k^{} (A) := |\{ (a_1, \dots, a_k,a'_1, \dots, a'_k) \in A^{2k} ~:~
    a_1 + \dots + a_k = a'_1 + \dots + a'_k \} |\,.
\end{equation}
%By convention $\T_1 (A) = |A|^2$. 
%In particular, we obtain the {\it higher energy} (see \cite{SS_higher})  
Also, one can define  the {\it higher energy} (see \cite{SS_higher})  
\begin{equation}\label{f:E_k2}
      \E_{k}(A) =  \sum_{x} (A\circ A)^k (x) = \sum_{x_1,\dots,x_{k-1}} R^{(k)} (x_1,\dots,x_{k-1})^2 := \E_{k,2} (A) \,.
\end{equation}
It is known and easy to check (see, e.g., \cite[proof of Theorem 4]{LS_popularity}) that $\E_{k,l} (A) = \E_{l,k} (A)$ for any integers $k,l>1$.
Note that the first formula in \eqref{f:E_k2} can be thought of as the definition of $\E_k (A)$ for any real $k>1$.
%Notice that the first formula in \eqref{f:E_k2} 
%%has sense 
%can be considered as a definition of $\E_k (A)$
%for any real $k>1$. 

Let $\FF{\Gr}$ be its dual group.
For any function $f:\Gr \to \mathbb{C}$ and $\xi \in \FF{\Gr}$ we define its Fourier transform using the formula \eqref{f:Fourier_representations}.
%We rely on the following basic identities
The Parseval identity is 
\begin{equation}\label{F_Par}
    N\sum_{g\in \Gr} |f(g)|^2
        =
            \sum_{\xi \in \FF{\Gr}} \big|\widehat{f} (\xi)\big|^2 \,.
\end{equation}
If $f,g : \Gr \to \C$ are some functions, then 
$$
    (f*g) (x) := \sum_{y\in \Gr} f(y) g(x-y) \quad \mbox{ and } \quad (f\circ g) (x) := \sum_{y\in \Gr} f(y) g(y+x) \,.
$$
One has 
\begin{equation}\label{f:F_svertka}
    \FF{f*g} = \FF{f} \FF{g} \,.
%%\quad \mbox{ and } \quad \FF{f \circ g} = \FF{f}^c \FF{g} = \ov{\FF{\ov{f}}} \FF{g} \,,
    %(\F{fg}) (x) = \frac{1}{N} (\F{f} * \F{g}) (x) \,.
\end{equation}
Having a function $f:\Gr \to \C$ and a positive integer $k>1$, we  write $f^{(k)} (x) = (f\circ f \circ \dots \circ f) (x)$, where the convolution $\circ$ is taken $k-1$ times. 
For example, $A^{(4)} (0) = \E(A)$. 
Also, let $\T_k (f) = \sum_{x} (f^{(k)} (x))^2$.

\bp

%Now we obtain some applications of another sort. 
We need a particular case of Corollary 1 from  \cite{sh_new_ineq}.

\begin{lemma}
    Let $k\ge 2$ be an even integer,  $l\ge 2$ be any integer and $A\subseteq \Gr$ be a set. 
    Then 
\begin{equation}\label{f:R_T_k} 
    \sum_{x} (A^{(k)} (x))^l = \T_{k/2} (R^{(l)}_A) \,.
\end{equation}
\label{l:R_T_k}
\end{lemma}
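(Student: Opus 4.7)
The strategy is to compute both sides by Fourier analysis on the appropriate groups and observe that they reduce to the same explicit formula. The crucial preliminary step is a factorization of $\FF{R^{(l)}_A}$ as a function on $(\FF\Gr)^{l-1}$. Starting from
\[
R^{(l)}_A(x_1,\dots,x_{l-1}) = \sum_{y \in \Gr} A(y)\prod_{i=1}^{l-1} A(y - x_i),
\]
I would interchange the orders of summation and substitute $z_i = y - x_i$ in each inner sum (which peels off a factor $\overline{\xi_i(y)}$) to obtain
\[
\FF{R^{(l)}_A}(\xi_1,\dots,\xi_{l-1}) = \FF A(\xi_1+\dots+\xi_{l-1}) \prod_{i=1}^{l-1} \overline{\FF A(\xi_i)},
\]
so in particular $|\FF{R^{(l)}_A}(\boldsymbol\xi)|^k = |\FF A(\xi_1+\dots+\xi_{l-1})|^k \prod_i |\FF A(\xi_i)|^k$.

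For the left-hand side, the evenness of $k$ lets me use the alternating-sign representation $A^{(k)}(x) = |\{(a_1,\dots,a_k)\in A^k \colon a_1 - a_2 + \dots - a_k = x\}|$, whose Fourier transform factors as $\FF{A^{(k)}}(\xi) = \FF A(\xi)^{k/2} \overline{\FF A(\xi)}^{k/2} = |\FF A(\xi)|^k$. Fourier inverting, raising to the $l$-th power, summing over $x \in \Gr$, and invoking orthogonality of characters collapses the $x$-sum to the constraint $\xi_1+\dots+\xi_l=0$, yielding
\[
\sum_x (A^{(k)}(x))^l = \frac{1}{N^{l-1}}\sum_{\xi_1+\dots+\xi_l = 0}\,\prod_{j=1}^{l} |\FF A(\xi_j)|^k.
\]
Eliminating $\xi_l$ via this constraint and using $|\FF A(-\xi)|=|\FF A(\xi)|$ rewrites the sum as $N^{-(l-1)}\sum_{\xi_1,\dots,\xi_{l-1}} |\FF A(\xi_1+\dots+\xi_{l-1})|^k \prod_i |\FF A(\xi_i)|^k$, matching the formula for $|\FF{R^{(l)}_A}|^k$ derived above.

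For the right-hand side, $R^{(l)}_A$ is real-valued, so every iterated $\circ$-convolution of it is real; combining Parseval on $(\FF\Gr)^{l-1}$ with the identity $|\FF{f^{(k/2)}}|^2 = |\FF f|^k$ (obtained by the usual conjugate-pairing coming from $\FF{f \circ g} = \overline{\FF f}\,\FF g$) gives
\[
\T_{k/2}(R^{(l)}_A) = \sum_{\mathbf x}\bigl((R^{(l)}_A)^{(k/2)}(\mathbf x)\bigr)^2 = \frac{1}{N^{l-1}}\sum_{\boldsymbol\xi} |\FF{R^{(l)}_A}(\boldsymbol\xi)|^k,
\]
and plugging in the factorization from the first paragraph reproduces exactly the expression derived for the left-hand side. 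The main obstacle is the careful bookkeeping of conjugates and character shifts in the Fourier expansion of $R^{(l)}_A$; once that factorization is established, the remainder of the argument is routine Fourier manipulation.
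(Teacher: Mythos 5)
The paper does not actually prove this lemma: it is imported verbatim as a particular case of Corollary 1 of the cited reference \cite{sh_new_ineq}, so there is no in-paper argument to compare against. Your Fourier-analytic proof is correct and self-contained: the factorization $\FF{R^{(l)}_A}(\xi_1,\dots,\xi_{l-1})=\FF{A}(\xi_1+\dots+\xi_{l-1})\prod_{i}\overline{\FF{A}(\xi_i)}$ is right, and both sides of \eqref{f:R_T_k} indeed reduce to $N^{-(l-1)}\sum_{\xi_1,\dots,\xi_{l-1}}|\FF{A}(\xi_1+\dots+\xi_{l-1})|^k\prod_{i=1}^{l-1}|\FF{A}(\xi_i)|^k$. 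The one point worth flagging is that $\circ$ is not associative, so $\FF{f^{(k)}}$ depends on the implicit bracketing: with the convention $f^{(j+1)}=f^{(j)}\circ f$ one gets $\FF{A^{(k)}}=|\FF{A}|^k$ for even $k$ (and this is the convention forced by the paper's own assertion $A^{(4)}(0)=\E(A)$), whereas the opposite bracketing would give $\overline{\FF{A}}^{\,k-1}\FF{A}$ and your left-hand-side computation would not go through as written. Since you only need $|\FF{f^{(k/2)}}|^2=|\FF{f}|^k$ on the right-hand side, which holds under either convention, a single sentence fixing the bracketing (or invoking the alternating-sign representation, as you do) closes this gap; with that, the proof is complete.
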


%We need a lemma.
Also, we need 
%a special case of 
the generalized triangle inequality \cite[Theorem 7]{SS_higher}.

\begin{lemma}
    Let $k_1,k_2$ be positive integers, $W\subseteq \Gr^{k_1}$, $Y\subseteq \Gr^{k_2}$ and $X,Z \subseteq \Gr$. 
    Then
\begin{equation}\label{f:gen_triangle_S}
    |W\times X| |Y-\Delta_{k_2} (Z)| \le |W\times Y \times Z - \Delta_{k_1+k_2+1} (X)| \,. 
\end{equation}
\label{l:gen_triangle_S}
\end{lemma}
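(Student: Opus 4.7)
The plan is to establish the inequality by exhibiting an explicit injection from the Cartesian product $(W \times X) \times (Y - \Delta_{k_2}(Z))$ into the set $W \times Y \times Z - \Delta_{k_1+k_2+1}(X)$. This is the standard Pl\"unnecke--Ruzsa covering idea that underlies the classical triangle inequality $|A|\cdot|B-C|\le|A-B|\cdot|A-C|$, of which the stated bound is a higher-dimensional generalisation.

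The preliminary step is a choice of representatives. For every $s \in Y - \Delta_{k_2}(Z)$, fix once and for all elements $y(s)\in Y$ and $z(s)\in Z$ with $s = y(s) - \Delta_{k_2}(z(s))$. With these choices in place, define
\[
\phi : (W \times X) \times (Y - \Delta_{k_2}(Z)) \longrightarrow W \times Y \times Z - \Delta_{k_1+k_2+1}(X)
\]
by the simultaneous shift
\[
\phi\bigl((w,x),\, s\bigr) := \bigl(w - \Delta_{k_1}(x),\; y(s) - \Delta_{k_2}(x),\; z(s) - x\bigr).
\]
Since the right-hand side is the image of $(w, y(s), z(s)) \in W\times Y\times Z$ under subtraction of $\Delta_{k_1+k_2+1}(x)$ with $x\in X$, it indeed lies in the target set.

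The entire content of the argument is injectivity of $\phi$. Assuming $\phi((w,x),s) = \phi((w',x'),s')$, the last $k_2+1$ coordinates read $y(s) - \Delta_{k_2}(x) = y(s') - \Delta_{k_2}(x')$ and $z(s) - x = z(s') - x'$. Subtracting the second equation from each component of the first yields $y(s) - \Delta_{k_2}(z(s)) = y(s') - \Delta_{k_2}(z(s'))$, that is, $s = s'$. Because the representatives $y(\cdot)$ and $z(\cdot)$ are functions of $s$ alone, this forces $z(s) = z(s')$, whence $x = x'$ from the single-coordinate equation and then $w = w'$ from the first $k_1$ coordinates. Comparing cardinalities gives the desired bound. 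I do not foresee any substantive obstacle; the only subtlety is that the representatives must be chosen as functions of $s$ alone, which is precisely what makes $\phi$ well-defined on the domain and recoverable on its image.
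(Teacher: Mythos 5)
Your proof is correct: the injection $\phi$ is well defined, lands in the target set, and the recovery of $s$, then $x$, then $w$ from the image is sound, the key point being exactly the one you flag — that $y(\cdot)$ and $z(\cdot)$ are fixed as functions of $s$. The paper itself quotes this lemma from \cite{SS_higher} without proof, and your argument is the standard Ruzsa-triangle-type injection used there, so there is nothing to add.
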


We have defined the quantity $\rho (A)$ and $M(A)$ in formulae \eqref{def:rho}, \eqref{def:M(A)} above. 
More generally,
%define 
consider 
\[
    \rho^{(k)} (A) := \max_{x\neq 0} A^{(k)} (x) \,.
\]
Thus $A^{(2)} (x) = (A\circ A) (x) = |A\cap (A+x)|$. 
Similarly to formula \eqref{def:rho} we put for any positive integer $l>1$
\[
    \rho_l (A) := \max_{|X|=l} |A_X| \,,
\]
where $A_X:= (A+x_1) \cap (A+x_2) \cap \dots \cap (A+x_l)$ and $X= \{x_1,\dots, x_l\}$ is a set of the cardinality $l$. Thus $\rho_2 (A) = \rho (A)$. 
If $X=\{0,x\}$, where $x\in \Gr$, then we write $A_{\{0,x\}} = A_x = A\cap (A+x)$. 
The inclusion of Katz--Koester \cite{Katz-Koester} is 
\begin{equation}\label{f:Katz-Koester}
    B+A_x \subseteq (A+B)_x \,.
\end{equation}

The signs $\ll$ and $\gg$ are the usual Vinogradov symbols. 
%When the constants in the signs  depend on a parameter $M$, we write $\ll_M$ and $\gg_M$.
%Let us denote by $[n]$ the set $\{1,2,\dots, n\}$.
All logarithms are to base $e$.
%If we have a set $A$, then we will write $a \lesssim b$ or $b \gtrsim a$ if $a = O(b \cdot \log^c |A|)$, $c>0$.
%For a prime number $p$ we write $\F_p = \Z/p\Z$ and $\F^*_p = \F_p \setminus \{0\}$. 

\section{The proof of the main result}
\label{sec:proof}

We are ready to obtain our first main result, which implies Theorem \ref{t:rho,M_cor} from the introduction.

%Let us show that there is a surprising connection between $M$ and $\rho$ in terms of the cardinality of  $A-A$.  
%the quantities $M, \rho$ are connected.

\begin{theorem}
    Let $\Gr$ be a finite abelian group, $A, B\subseteq \Gr$, $|A|=\d N$, $\d \in (0,1]$, further $|B|=\zeta |A|$,  $|A-A| = K|A|$, $|B-B| = K_*|B|$, and let $|A+B|=\o |A-A|$, where $\zeta, \o \in \R^{+}$. 
    Suppose that $|A| \ge (2K \cdot \max\{1,\o\})^8$.
    %$|A| \ge 2^{20}$ 
%    and 
%\begin{equation}\label{cond:kappa}
%    K^2 \delta < 1\,.
%\end{equation}
%    define $\kappa = K \d \le 1$.
    Then 
\begin{equation}\label{f:rho,M}
   M^2 (B) \rho (A) \ge \frac{|A|^2 |B|^2}{|A+B|} 
    \cdot \left( 1-  
    %\frac{14 \log^2 K}{\log |A|}  
     \frac{6 \log K \cdot \log (\zeta KK^*)}{\log |A|} 
    - (\o K)^2 \d \right) 
   \,,
   %\cdot (1-\kappa) \,.
\end{equation}
    and, similarly, 
\begin{equation}\label{f:rho,M_2}
    M^{} (A-A) \rho^2 (A) \ge \frac{|A|^3}{K} 
    \cdot \left( 1-  \frac{16 \log^2 (2K)}{\log |A|}  - K^3 \d \right) \,.
\end{equation}  
\label{t:rho,M}
\end{theorem}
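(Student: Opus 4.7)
My strategy combines Plancherel's theorem with the higher-moment identity of Lemma~\ref{l:R_T_k}, exploiting both $\supp(A\circ A)\subseteq A-A$ and the pointwise bound $(A\circ A)(x)\le\rho(A)$ for $x\ne 0$. I sketch the argument for \eqref{f:rho,M_2}; \eqref{f:rho,M} runs parallel with the indicator $D:=A-A$ replaced by $B\circ B$.

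For each integer $k\ge 2$, Parseval applied to $D\cdot(A\circ A)^{k-1}$ (using that $A\circ A$ vanishes outside $D$) yields the identity
\[
T_{k-1}(A) \;=\; \sum_{x}D(x)\,(A\circ A)^{k-1}(x) \;=\; \frac{1}{N^{k-1}}\sum_{\xi}\widehat{D}(\xi)\,\bigl(|\widehat{A}|^{2}\bigr)^{*(k-1)}(\xi).
\]
The $\xi=0$ contribution is $K|A|\,T_{k-1}/N$, and the remainder is bounded by $M(A-A)$ times the sum over $\xi\ne 0$ of $\bigl(|\widehat{A}|^{2}\bigr)^{*(k-1)}(\xi)=(N|A|)^{k-1}-N^{k-2}T_{k-1}$. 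Rearranging gives $T_{k-1}(A)(1-K\delta)\le M(A-A)|A|^{k-1}$. Coupling this Fourier estimate with the combinatorial lower bound $T_{k-1}(A)\ge|A|^{k}/K^{k-2}$ (H\"older, using $|\supp(A\circ A)|\le K|A|$) and the pointwise upper bound $T_{k-1}(A)\le|A|^{k-1}+\rho(A)^{k-2}|A|^{2}$ (from $(A\circ A)(x)\le\rho(A)$ for $x\ne 0$) produces a family of estimates of the shape $M(A-A)\rho(A)^{k-2}\gtrsim|A|/K^{k-2}$.

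A single value of $k$, together with Cauchy--Schwarz $\rho(A)\ge|A|/K$, yields only $M(A-A)\rho^{2}(A)\gtrsim|A|^{3}/K^{2}$, short of \eqref{f:rho,M_2} by one factor of $K$. To close this gap I would iterate the identity through higher convolutions $A^{(2j)}$, with $j$ ranging from $1$ up to $j_{0}\approx\tfrac{1}{8}\log|A|/\log(2K)$. At each stage, Lemma~\ref{l:gen_triangle_S} controls the size of the higher difference set $|A^{j}-\Delta_{j}(A)|$ in terms of $|A-A|$, and the Katz--Koester inclusion \eqref{f:Katz-Koester} transfers the $\rho$-type control from one level to the next. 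The per-iteration multiplicative loss is $1-O(\log(2K)/\log|A|)$, so summing over all $\log|A|/\log(2K)$ levels yields the cumulative factor $1-O(\log^{2}(2K)/\log|A|)$ appearing in \eqref{f:rho,M_2}. The additive errors $K^{3}\delta$ in \eqref{f:rho,M_2} and $(\omega K)^{2}\delta$ in \eqref{f:rho,M} track the $\xi=0$ main term across the iterations.

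\emph{Main obstacle.} The delicate point is running the iteration uniformly across the two extreme regimes: (a) when $\rho(A)$ is close to its Cauchy--Schwarz lower bound $|A|/K$, the function $A\circ A$ is nearly constant on $A-A$, and the approximate identity $|\widehat{A}(\xi)|^{2}\approx|A|(1-1/K)+(|A|/K)\widehat{D}(\xi)$ forces $M(A-A)$ to be of order $K|A|$ via its transfer to $M(A)$; (b) when $\rho(A)$ is significantly larger than $|A|/K$, the factor $\rho^{2}(A)$ itself carries most of the bound and only the baseline estimate $M(A-A)\ge|A|(1-K\delta)$ is needed. Ensuring that a single iterative scheme handles both regimes, and that the hypothesis $|A|\ge(2K\max\{1,\omega\})^{8}$ supplies exactly the depth of iteration required while keeping the $\xi=0$ main term subdominant, is the crux of the argument.
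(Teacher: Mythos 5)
Your proposal assembles several of the right tools (higher difference sets, Lemma \ref{l:gen_triangle_S}, Katz--Koester, a depth of order $\log|A|/\log K$), but the two estimates you actually derive are both the trivial ones, and the step you propose for closing the resulting factor-of-$K$ gap is not an argument. Concretely: your Parseval identity for $\sum_x D(x)(A\circ A)^{k-1}(x)$ yields only $M(A-A)\ge(1-K\delta)|A|$ (taking $k=2$; larger $k$ gives something weaker), and your moment bounds yield only $\rho\ge(|A|/K)(1-o(1))$. Their product is $|A|^3/K^2$, and repeating these two inequalities at higher convolution levels cannot recover the missing $K$: each level again produces a trivial $M$ bound and a trivial $\rho$ bound, and multiplying $1-O(\log(2K)/\log|A|)$ factors only affects the error term, never the main term. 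The claimed family $M(A-A)\rho^{k-2}\gtrsim|A|/K^{k-2}$ does not follow from the three displayed inequalities and is in any case weaker than the product of the two trivial bounds.

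The paper's mechanism is different, and it is the essential missing idea. Write $|A^{n-1}-\Delta_{n-1}(A)|=\eps_{n-1}|D|^{n-1}$. The Cauchy--Schwarz step $|A|^{2n}\le|A^{n-1}-\Delta_{n-1}(A)|\,\E_n(A)$ gives $\rho\gtrsim\eps_{n-1}^{-1/(n-1)}|A|/K$, i.e.\ a gain over the trivial bound precisely when $\eps_{n-1}$ is small. The lower bound for $M$ does not come from your Parseval identity but from the weighted count $\sigma=\sum_x|B_x||S_x|$ (resp.\ $\sigma_*=\sum_{x\in D}|D_x|$), bounded below combinatorially via Katz--Koester together with the growth estimate $|A+M|\ge\eps_{m-1}^{1/m}(|M|/|D|)^{1/m}|D|$ (a consequence of Lemma \ref{l:gen_triangle_S}) and H\"older, and bounded above via Fourier; this produces a lower bound on $M^2(B)$ (resp.\ on the product $M(A-A)\rho$) carrying the factor $\eps_{m-1}^{+1/m}$, i.e.\ a loss precisely when $\eps$ is small. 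Multiplying the two bounds with $m=n$, the $\eps$ powers combine to $\eps_{n-1}^{-1/(n(n-1))}\ge 1$, so one wins in both regimes simultaneously; choosing $n=[\tfrac12\log_K|A|]$ turns the residual $(\zeta KK_*)^{-1/n}$-type losses into the stated $1-O(\log K\cdot\log(\zeta KK_*)/\log|A|)$ factor. Note in particular that for \eqref{f:rho,M_2} the paper proves the joint bound $M(A-A)\rho\ge\bigl(\eps_{m-1}^{1/m}(2K)^{-2/m}(1-|A|^{-1})-\kappa_*\bigr)|A|^2$, which is a factor of $K$ stronger than the product of $M(A-A)\ge|A|$ with $\rho\ge|A|/(2K)$; this joint estimate is exactly what your scheme lacks, and your own ``main obstacle'' paragraph describes the two regimes without supplying the quantity $\eps_{n-1}$ that interpolates between them.
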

\begin{proof}
    Let $\rho = \rho (A)$, $M=M(B)$, $D=A-A$, $S=A+B$, $|D| = K|A|$, $|S| = \o |D|$, and $\kappa := K \d \o \le 1$.
%    $\kappa := K \d \o^2$.
    By the usual triangle inequality for sets (take $k_1=k_2=1$ in Lemma \ref{l:gen_triangle_S}), we have $|A| |B-B| \le |A+B|^2$ and hence 
\begin{equation}\label{tmp:zeta_upper}
    \zeta K_* \le \o^2 K^2 \,.
\end{equation}
    Also, let $n\ge 2$ be a positive integer and put 
\begin{equation}\label{def:eps_n}
    |A^{n-1} - \D_{n-1} (A)| = \eps_{n-1} |D|^{n-1} \,,
\end{equation}
    where 
\begin{equation}\label{f:eps_n_lower}
    K^{-n} \le \eps_n \le 1 \,.
\end{equation}    
    Then by the Cauchy--Schwarz inequality and %formula 
    the second equality in 
    \eqref{f:E_k2}  one has 
\begin{equation}\label{tmp:05.04_2}
    |A|^{2n} \le  |A^{n-1} - \D_{n-1} (A)| \E_{n} (A)
    \le \eps_{n-1} |D|^{n-1} (\rho^{n-1} |A|^2 + |A|^n) 
\end{equation}
\begin{equation}\label{tmp:05.04_3}
    =
    \eps_{n-1} |D|^{n-1} \rho^{n-1} |A|^2  +  \eps_{n-1} K^{n-1} |A|^{2n-1} \,.
\end{equation}
    Thus, we get 
    %ignoring the error terms, we get 
\begin{equation}\label{f:rho}
    \rho \ge \eps^{-\frac{1}{n-1}}_{n-1} \cdot \frac{|A|}{K} \left( 1-  \frac{\eps_{n-1} K^{n-1}}{|A|} \right)^{1/(n-1)} 
    >
    \eps^{-\frac{1}{n-1}}_{n-1} \cdot \frac{|A|}{K} \left( 1-  \frac{2\eps_{n-1} K^{n-1}}{(n-1)|A|} \right)  \,,
%    + \mbox{some error terms} \,.
\end{equation}
    provided $n \le 2^{-1} \log_K |A|$. 
    Now using Lemma \ref{l:gen_triangle_S}, we obtain for an arbitrary  set  $M\subseteq \Gr$ and any positive integer $m$
\[
    \eps_{m-1} |M|  |D|^{m-1} = |M| |A^{m-1} - \D_{m-1} (A)|
    \le
    |A^m - \D_m (M)| 
    \le 
    |A-M|^m \,,
\]
    and thus replacing $M$ to $(-M)$, we derive 
\begin{equation}\label{f:growth}
    |A+M| \ge \eps^{1/m}_{m-1} \cdot \left( \frac{|M|}{|D|} \right)^{1/m} |D| \,.
\end{equation}
    Finally,  we repeat the argument from \cite{LS_2.6}. 
    Using \eqref{f:growth}, the inclusion \eqref{f:Katz-Koester} and the H\"older inequality, one 
    %has 
    obtains 
\[
    \sigma:= \sum_x |B_x| |S_x| \ge \sum_x |B_x| |A+B_x|
    \ge 
    \eps^{1/m}_{m-1} |D|^{1-1/m} \sum_x |B_x|^{1+1/m}
\]
\begin{equation}\label{tmp:sigma_08.04}
    \ge 
    \eps^{1/m}_{m-1} K |A| |B|^2 (\zeta K K_*)^{-1/m} \,.
\end{equation}
    Applying the Fourier transform, formula \eqref{f:F_svertka} and the Parseval identity \eqref{F_Par}, we get 
\[
    \sigma = N^{-1} \sum_\xi |\FF{B} (\xi)|^2 |\FF{S} (\xi)|^2
    \le \frac{\o^2 K^2 |B|^2 |A|^2}{N} + M^2 \cdot (\o K|A| - \o^2 K^2 |A|^2/N) 
\]
\begin{equation}\label{tmp:07.04_1}
    <  \o \kappa K |A|^{} |B|^2  +  \o K M^2|A| \,. 
\end{equation}
    Combining the last two bounds, we 
    %obtain
    have 
\begin{equation}\label{f:M}
    \o M^2 > \left( \eps^{1/m}_{m-1} (\zeta K K_*)^{-1/m} - \o \kappa \right) |B|^2  \,.
    %+ \mbox{some error terms} \,.
\end{equation}
    Taking $m=n = [2^{-1} \log_K |A|] \ge 2$ and multiplying \eqref{f:rho}, \eqref{f:M}, we obtain in view of estimates \eqref{tmp:zeta_upper}, \eqref{f:eps_n_lower} that 
\[
    \o M^2 \rho \ge \frac{|A| |B|^2}{K} \cdot \left( 1-  \frac{2\eps_{n-1} K^{n-1}}{(n-1)|A|} \right)
    \left( \eps^{-\frac{1}{n(n-1)}}_{n-1}  (\zeta K K_*)^{-1/n} -  \eps^{-\frac{1}{n-1}}_{n-1} \o \kappa \right) 
\]
\[
    > 
    \frac{|A| |B|^2}{K} \cdot  \left( 1-  \frac{2}{\sqrt{|A|}} \right)  \left( 1-  \frac{\log (\zeta KK^*)}{n}  - K \kappa \o \right) 
\]
\[
    \ge
    \frac{|A| |B|^2}{K} \cdot  \left( 1-  \frac{2}{\sqrt{|A|}} \right)  \left( 1-  \frac{4 \log K \cdot \log (\zeta KK^*)}{\log |A|}  - K \kappa \o \right) 
\]
\begin{equation}\label{tmp:05.04_1}
    > \frac{|A| |B|^2}{K} \cdot \left( 1-  \frac{6 \log K \cdot \log (\zeta KK^*)}{\log |A|}  - K \kappa \o \right) 
%    \,.
\end{equation}
\begin{comment}
    Thus we have obtained a bound that is slightly worse than \eqref{def:eps_n}. 
    Now we use the tensor trick. 
    Namely, for any integer  parameter $k$ consider $\mathcal{G} = \Gr^k$, $\mathcal{A} = A^k$, 
    $\mathcal{D} = \mathcal{A} - \mathcal{A}$, and $|\mathcal{D}| = \mathcal{K}|\mathcal{A}|$.
    We have $|\mathcal{A}| = |A|^k$, $M(\mathcal{A}) = M^k (A)$, $\rho (\mathcal{A})  = \rho^k (A)$, and $\mathcal{K} = K^k$. 
    Applying \eqref{tmp:05.04_1} for the new set $\mathcal{A}$, we get for any $k$
\[
      M^2 \rho \ge \frac{|A|^3}{K} \cdot \left( 1-  \frac{14}{k\log |A|}  - (K\kappa)^k \right)^{1/k} \,.
\]
%    as required. 
%    $|A| \ge 2^{20}$
    Tending $k$ to infinity and using our condition \eqref{cond:kappa}, we obtain the result. 
\end{comment}
    as required.

    It remains to obtain \eqref{f:rho,M_2}. 
%    Let $D' = D\setminus \{0\}$. 
    In this case we use calculations similar to \eqref{tmp:sigma_08.04} (now $B=A$) and consider
\[
    \sigma_* = \sum_{x\in D} |D_x| \ge \sum_{x\in D} |A-A_x| \ge \eps_{m-1}^{1/m} |D|^{1-1/m} \sum_{x\in D} |A_x|^{1/m} 
\]
\[
    %\ge
    > 
    \eps_{m-1}^{1/m} K \rho^{-1} |A|^3 (|D|/\rho)^{-1/m} 
    - \eps_{m-1}^{1/m} K \rho^{-1} |A|^2 (|D|/\rho)^{-1/m} \,.
\]
    As in  \eqref{tmp:07.04_1} putting $\kappa_* = \d K^2\rho |A|^{-1} \le \d K^2$, we get 
\[
    \sigma_* = N^{-1} \sum_\xi \FF{D} (\xi) |\FF{D} (\xi)|^2
    \le \frac{K^3 |A|^3}{N} + M(D) \cdot (K|A| - K^2 |A|^2/N) 
\]
\[
    <  \kappa_* K |A|^3 \rho^{-1}  +  K M (D) |A| \,. 
\]
    %In other words, 
    Combining the last two bounds, we derive 
\[
    M(D) \rho > \left( \eps^{1/m}_{m-1} (|D|/\rho)^{-1/m} (1-|A|^{-1}) - \kappa_* \right) |A|^2 
    \ge \left( \eps^{1/m}_{m-1} (2K)^{-2/m} (1-|A|^{-1}) - \kappa_* \right) |A|^2  \,.
\]
    Here we have used 
    %the fact that 
    the simple bound 
    $\rho \ge |A|/(2K)$. 
     Taking $m=n = [2^{-1} \log_K |A|] \ge 2$   and multiplying the last estimate by \eqref{f:rho}, we obtain the desired bound \eqref{f:rho,M_2}. 
    This completes the proof. 
$\hfill\Box$
\end{proof}

\bp 

Let us discuss obtained inequalities  \eqref{f:rho,M_cor}, \eqref{f:rho,M}. 
%Secondly, 
First of all, 
%it is easy to see 
one can check 
that the both bounds 
%\eqref{f:rho,M}, \eqref{f:rho,M_2} 
are tight, see the construction after 
%estimate 
formula 
\eqref{f:rho,M_cor_via_f}. 
Secondly, it is easy to make sure that 
%see
our uncertainty principle does not hold for an arbitrary function.

\begin{remark}
    Inequalities \eqref{f:rho,M_cor}, \eqref{f:rho,M} have no place for any function $f$, namely, the bounds
\begin{equation}\label{f:uncertainty_f}
    \| \FF{f} \|_\infty \cdot \| f\|_{\infty} \cdot |\supp f| \ge \| f\|_1^2 
    \,,
\end{equation}
\begin{equation}\label{f:uncertainty_f_max}
     \max_{\xi\neq 0} |\FF{f}(\xi)| \cdot \max_{x\neq 0} |f(x)| \cdot |\supp f| \ge \| f\|_1^2 
    \,,
\end{equation} 
    fail. 
    Indeed,  let $p$ be a prime number, $\Gr = \Z/p\Z$,   and $f (x) = A(x) - |A|/p$ be the balanced function of a random set $A$ of size $|A| > p^{2/3+\eps}$, where $\eps>0$ (one can even take the  set of quadratic residues to get a constructive example). 
    Then the right--hand side of \eqref{f:uncertainty_f}, \eqref{f:uncertainty_f_max} is $|A|^2$ but the left--hand side of \eqref{f:uncertainty_f}, \eqref{f:uncertainty_f_max} is $O(\sqrt{|A|} p^{1+o(1)}) = o(|A|^2)$. 
\label{r:non_general}
\end{remark}

\begin{remark}
\label{r:H+L_energy}
    It is easy to see that 
    the argument of the proof of Theorem \ref{t:rho,M} gives us 
    %shows 
%    the same argument shows 
    %that 
\begin{equation}\label{f:connection_Ek_E(A,D)}
    \E_k (A) \E^k (A,D) \ge \frac{|A|^{4k+1}}{K} \,,
\end{equation}
    and the last bound is tight up to constants. 
    Indeed, let $\Gr = \F_2^n$ and let $A=H + \L$, $|A|=|H||\L|$,  where $H\le \Gr$ and $|\L|=K$  be a basis (for $\Gr = \Z$ a similar construction can be found in \cite[Claim 1]{LS_popularity}). Then for $x\in H$ one has $|A_x|=|A|$ and for $x\in D\setminus H$ the following holds $|A_x| = 2|A|/K$. 
    %Therefore
    Thus $\E(A,D) = O(|A|^3)$, $\E_k (A) = \frac{|A|^{k+1}}{K} (1+o_K(1))$ and \eqref{f:connection_Ek_E(A,D)} is sharp (up to constants). 
    
    The  example above shows (also, see \cite[Claim 1]{LS_popularity}) that set of large $A_x$ has  measure zero. Similarly, $\FF{A} (\xi) = |H| H^\perp (\xi) \FF{\L} (\xi)$ and thus the set of large Fourier coefficients of $A$ has  measure zero as well. 
    Therefore, inequality \eqref{f:rho,M} is rather delicate and has place on a set of measure zero. 
\end{remark}

\bp 

Using the arguments similar to \cite{LS_popularity} (also, see \cite{SS_higher}) we obtain an analogue of Theorem \ref{t:rho,M} for the quantity $\rho_l (A)$. 
%Here and below we consider the symmetric case $A=B$ for simplicity. 
Here and below, for simplicity, we consider the symmetric case $A=B$.

\begin{theorem}
    Let $\Gr$ be a finite abelian group, $A\subseteq \Gr$, $|A|=\d N$, $l>1$ be a positive integer, 
    and let $|A-A| = K|A|$. 
    Suppose that $|A| \ge K^{8(l-1)}$, and $|A| > 4l^4$.
    %$|A| \ge 2^{20}$ 
%    and 
%\begin{equation}\label{cond:kappa_l}
%    K^2 \delta < 1\,.
%\end{equation}
%    define $\kappa = K \d \le 1$.
    Then 
\begin{equation}\label{f:rho,M_l}
   M^{2(l-1)} (A) \rho_l (A) \ge \frac{|A|^{2l-1}}{K^{l-1}} 
   \cdot 
    \left( 1-  \frac{2l^2 }{\sqrt{|A|}} \right)  \left( 1-  \frac{12 (l-1) \log^2 K}{\log |A|}  - K^2 \d \right)^{l-1}
   \,.
   %\cdot (1-\kappa) \,.
\end{equation}
\label{t:rho,M_l}
\end{theorem}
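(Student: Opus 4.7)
The plan is to follow the three-part structure of the proof of Theorem~\ref{t:rho,M} in its symmetric form, replacing the lower bound on $\rho(A)$ by one on $\rho_l(A)$ and iterating the Katz--Koester / growth / H\"older chain $l-1$ times in place of the single application there.

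First, for the lower bound on $\rho_l$: Cauchy--Schwarz applied to $\sum_{\vec x\in \Gr^{l-1}} R^{(l)}_A(\vec x)=|A|^l$ with support contained in $A^{l-1}-\D_{l-1}(A)$ of size $\eps_{l-1}|D|^{l-1}$ yields $\E_l(A)\ge |A|^{l+1}/(\eps_{l-1}K^{l-1})$. Since $R^{(l)}_A(\vec x)\le \rho_l(A)$ whenever $|\{0,x_1,\dots,x_{l-1}\}|=l$, one has $R^{(l)}_A(\vec x)^2\le \rho_l\cdot R^{(l)}_A(\vec x)$ on generic $\vec x$, summing to at most $\rho_l |A|^l$; the contribution of the ``low-dimensional'' tuples $\vec x$ (with at most $l-1$ distinct values among $\{0\}\cup\{x_i\}$) is bounded by $O(l^2 K^{l-2}|A|^l)$, which is negligible under the hypotheses $|A|\ge K^{8(l-1)}$ and $|A|>4l^4$. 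Comparing the two yields $\rho_l(A)\ge (|A|/K^{l-1})\cdot(1-O(l^2/\sqrt{|A|}))$, which accounts for the factor $(1-2l^2/\sqrt{|A|})$ in \eqref{f:rho,M_l}.

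Second, define the correlation sum $\sigma_l := \sum_u (A\circ A)^{*(l-1)}(u)\,(S\circ S)(u)$ with $S:=A+A$. By the convolution theorem $\FF{(A\circ A)^{*(l-1)}}(\xi)=|\FF{A}(\xi)|^{2(l-1)}$, and Parseval gives
\[
\sigma_l \;=\; N^{-1}\sum_\xi |\FF{A}(\xi)|^{2(l-1)}|\FF{S}(\xi)|^2 \;\le\; |A|^{2(l-1)}|S|^2/N + M^{2(l-1)}(A)\cdot |S|,
\]
exactly analogous to \eqref{tmp:07.04_1} (with $|\FF{A}|^{2(l-1)}$ in place of $|\FF{B}|^2$). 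For the matching combinatorial lower bound, rewrite $\sigma_l=\sum_u h(u)|S_u|$ with $h:=(A\circ A)^{*(l-1)}$, and iterate the Katz--Koester inclusion $A+A_u\subseteq S_u$ together with the growth inequality \eqref{f:growth} once for each of the $l-1$ copies of $A\circ A$ in $h$: with $m=\lfloor 2^{-1}\log_K|A|\rfloor$, each application contributes an $\eps_{m-1}^{1/m}$ factor and a multiplicative error of the same shape $1-\tfrac{12\log^2 K}{\log|A|}-K^2\d$ as in the $l=2$ case. Standard H\"older bookkeeping controlled by the support bound $|A^{l-1}-\D_{l-1}(A)|\le \eps_{l-1}|D|^{l-1}$ then yields $\sigma_l \ge (K|A|^{2l-1}/K^{l-1})\cdot(1-\tfrac{12\log^2 K}{\log|A|}-K^2\d)^{l-1}$, and combining with the Fourier upper bound and the lower bound on $\rho_l$ delivers~\eqref{f:rho,M_l}.

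The principal difficulty lies in the iteration: verifying that the $l-1$ passes of the Katz--Koester / growth / H\"older chain each contribute \emph{exactly} the same per-step error as in the single application of Theorem~\ref{t:rho,M}, so that the errors compound into the observed $(\cdots)^{l-1}$ factor without degrading the main-term exponents $|A|^{2l-1}$ and $K^{l-1}$. The hypothesis $|A|\ge K^{8(l-1)}$ is chosen precisely so that the condition $|A|\ge (2K)^8$ from Theorem~\ref{t:rho,M} may be invoked at each of the $l-1$ iterations.
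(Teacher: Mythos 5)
There is a genuine gap in your first step, and it is the step that carries the whole theorem. You lower-bound $\rho_l(A)$ by applying Cauchy--Schwarz at level $l$, i.e.\ $|A|^{2l}\le \eps_{l-1}|D|^{l-1}\E_l(A)$, which yields $\rho_l \ge \eps_{l-1}^{-1}\,|A|/K^{l-1}\cdot(1-\dots)$. But the factor this must cancel on the Fourier side is $\eps_{m-1}^{(l-1)/m}$ --- the $(l-1)$-st power of the factor $\eps_{m-1}^{1/m}$ that the growth inequality \eqref{f:growth} injects into the bound \eqref{f:M} for $M^2$ --- where $m\approx\log_K|A|$ is large. These two $\eps$'s have different subscripts and nothing in your argument relates them favourably: one only has the monotonicity $\eps_{m-1}\le\eps_{l-1}$, which goes the wrong way, so it is consistent with your hypotheses that $\eps_{l-1}=1$ while $\eps_{m-1}\approx K^{-(m-1)}$, in which case $\eps_{l-1}^{-1}\cdot\eps_{m-1}^{(l-1)/m}\approx K^{-(l-1)}$ and your main term degrades to $|A|^{2l-1}/K^{2(l-1)}$. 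The paper avoids this by proving the $\rho_l$ bound with the \emph{same} large parameter $n$ that appears in the Fourier step: it applies H\"older in the form $\big(\sum_{\vec x}R^{(n)}_A(\vec x)\big)^l\le|A^{n-1}-\D_{n-1}(A)|^{l-1}\,\E_{l,n}(A)$ and then uses the symmetry $\E_{l,n}=\E_{n,l}$ to bring in $\rho_l$, obtaining $\rho_l\ge \eps_{n-1}^{-\frac{l-1}{n-1}}\,|A|/K^{l-1}\cdot(1-\dots)$; the exponent $-\tfrac{l-1}{n-1}$ then dominates $\tfrac{l-1}{n}$ and the $\eps$'s cancel in the right direction. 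This is also where the hypotheses really come from: controlling the resulting error term $l^2\eps_{n-1}^{l-1}K^{(l-1)(n-1)}/|A|$ forces the smaller choice $n=[(2(l-1))^{-1}\log_K|A|]$, which requires $|A|\ge K^{8(l-1)}$ and produces the $(l-1)$ inside the error $12(l-1)\log^2K/\log|A|$ --- not, as you suggest, an invocation of the $l=2$ hypothesis at each of $l-1$ iterations.

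Your second step also departs from the paper and is not actually carried out. The paper introduces no new correlation sum $\sigma_l$ and does not iterate Katz--Koester over $l-1$ copies of $A\circ A$: it simply takes the already-proved inequality \eqref{f:M} with $B=A$ and raises it to the power $l-1$, which is where the factor $\big(\eps_{n-1}^{-1/(n(n-1))}K^{-2/n}-\eps_{n-1}^{-1/(n-1)}\kappa\big)^{l-1}$ comes from. The iteration you describe is precisely the part you flag as ``the principal difficulty'' and leave unverified; it is unnecessary, and even if you supplied it, the $\eps$-mismatch above would still defeat the main term.
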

\begin{proof}
    We use the notation and the argument of the proof of Theorem \ref{t:rho,M}. 
    Let $\rho_l = \rho_l (A)$.
    %Using
    Applying 
    the H\"older inequality, we obtain an analogue of calculations in \eqref{tmp:05.04_2}, \eqref{tmp:05.04_3}  (see  \cite[proof of Theorem 4]{LS_popularity}) 
\[
    |A|^{ln} = \left( \sum_{x_1,\dots,x_{n-1}} R^{(n)}_A (x_1,\dots, x_{n-1}) \right)^l 
    \le |A^{n-1} - \D_{n-1} (A)|^{l-1} \E_{l,n} (A) 
\]
\[
    \le 
    \eps^{l-1}_{n-1} |D|^{(l-1)(n-1)} \E_{n,l} (A)
%\]
%\[
    \le \eps^{l-1}_{n-1} |D|^{(l-1)(n-1)} \left( \rho^{n-1}_l |A|^l + l^2 |A|^{n+l-2} \right)
\]
    and hence 
\[
    \rho_{l} \ge \eps^{-\frac{l-1}{n-1}}_{n-1} \cdot \frac{|A|}{K^{l-1}} \left( 1-  \frac{l^2 \eps^{l-1}_{n-1} K^{(l-1)(n-1)}}{|A|} \right)^{1/(n-1)} 
    \ge 
    \eps^{-\frac{l-1}{n-1}}_{n-1} \cdot \frac{|A|}{K^{l-1}} \left( 1-  \frac{2l^2 \eps^{l-1}_{n-1} K^{(l-1)(n-1)}}{(n-1)|A|} \right) \,,
\]
    provided  $(l-1) n \le 2^{-1} \log_K |A|$. 
    After that we repeat the proof of  Theorem \ref{t:rho,M}, namely, we put $m=n = [(2(l-1))^{-1} \log_K |A|] \ge 2$ 
    and obtain 
    %and using \eqref{f:M}, 
    %one obtains 
    %we obtain 
    %consider 
\[
    M^{2(l-1)} \rho_l \ge \frac{|A|^{2l-1}}{K^{l-1}} \cdot \left( 1-  \frac{2l^2 \eps^{l-1}_{n-1} K^{(l-1)(n-1)}}{(n-1)|A|} \right) 
    \left( \eps^{-\frac{1}{n(n-1)}}_{n-1}  K^{-2/n} -  \eps^{-\frac{1}{n-1}}_{n-1} \kappa \right)^{l-1} 
\]
\[
    \ge 
    \frac{|A|^{2l-1}}{K^{l-1}} \cdot 
    \left( 1-  \frac{2l^2 }{\sqrt{|A|}} \right)  \left( 1-  \frac{12 (l-1) \log^2 K}{\log |A|}  - K \kappa \right)^{l-1} \,, 
\]
    where we have used inequality \eqref{f:M}. 
%    Using the tensor trick again, we obtain the required result.
    This completes the proof. 
$\hfill\Box$
\end{proof}

\section{Further generalizations}

    Now we obtain an analogue of Theorem \ref{t:rho,M} for the quantity $\rho^{(k)} (A)$, $k>2$. 
    First of all, we give a trivial lower bound for $\rho^{(k)} (A)$.
    Let $k=2s \ge 4$ for simplicity. 
    %Notice that by 
    Using
    the Cauchy--Schwartz inequality, 
    %we get 
    one has 
\begin{equation}\label{f:rho^k_CS}
    %\frac{|A|^{k+1}}{K} = 
    \frac{|A|^{k+2}}{|sA \pm A|} \le \T_{s+1} (A) = \sum_{x} A^{(k)} (x) A^{(2)} (x) 
    \le 
    \T_{s} (A) |A| + \rho^{(k)} (A) |A|^2 \,.
\end{equation}
    Hence, if we ignore the first term in \eqref{f:rho^k_CS} (it is easy to see that it is negligible for small $|sA \pm A|$), 
    %we get 
    then we derive 
\begin{equation}\label{tmp:15.04_1}
    \rho^{(k)} (A) \ge \frac{|A|^{k}}{|sA \pm A|} \cdot (1-o(1)) 
    %\,.
\end{equation}
    and this 
    %bound 
    estimate 
    can be considered as a trivial lower bound for the quantity $\rho^{(k)} (A)$.

\begin{theorem}
    Let $\Gr$ be a finite abelian group, $A\subseteq \Gr$ and let $k=2s\ge 2$ be a positive integer. 
    Also, let 
    $|A|=\d N$,  
    %let $|A-A| = K|A|$. 
     $|sA \pm A| = K|A|$ and $|sA - sA| = K_* |A|$.
    Suppose that $$|A|^{nk-n+1}  \ge  K^8_* \T^{n}_s (A) \,,$$ 
    and 
    $|A| > 4$. 
    %$|A| \ge 2^{20}$ 
%    and 
%\begin{equation}\label{cond:kappa}
%    K^2 \delta < 1\,.
%\end{equation}
%    define $\kappa = K \d \le 1$.
    Then 
\begin{equation}\label{f:rho,M_k}
   M^2 (A) \rho^{(k)} (A) \ge \frac{|A|^{k+1}}{K} 
    \cdot \left( 1-  \frac{14 \log^2 K_*}{\log |A|}  - \d K K_* \right) 
   \,.
   %\cdot (1-\kappa) \,.
\end{equation}
\label{t:rho,M_k}
\end{theorem}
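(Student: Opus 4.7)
The plan is to mimic the proof of Theorem \ref{t:rho,M}, replacing the two-fold correlation $A\circ A$ by the $k$-fold convolution $A^{(k)}$ throughout, and to obtain the result as the product of a sharp lower bound for $\rho^{(k)}(A)$ and a sharp lower bound for $M^2(A)$.

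For the first half I would study the quantity $\sigma := \sum_x A^{(k)}(x)(A\circ A)(x) = \T_{s+1}(A)$, where the identity holds by Parseval since both $A^{(k)}$ and $A\circ A$ have non-negative Fourier transforms $|\FF A|^k$ and $|\FF A|^2$. The Cauchy--Schwarz bound in \eqref{f:rho^k_CS} gives $\sigma \ge |A|^{k+1}/K$, and combined with the elementary splitting $\sigma \le \T_s(A)|A| + \rho^{(k)}(A)|A|^2$ this already yields the baseline $\rho^{(k)}(A) \ge |A|^{k-1}/K\cdot(1-o(1))$. To sharpen the error to $1 - O(\log^2 K_*/\log|A|)$ I would iterate exactly as in the passage from \eqref{tmp:05.04_2}--\eqref{tmp:05.04_3} to \eqref{f:rho}: use H\"older with exponent $n$,
\[
    |A|^{nk} = \Big(\sum_x A^{(k)}(x)\Big)^n \le |Y|^{n-1}\sum_x A^{(k)}(x)^n\,,
\]
where $Y = \supp(A^{(k)})$ has size at most $K_*|A|$, split the $n$-norm into the diagonal $\T_s(A)^n$ and an off-diagonal contribution $\le \rho^{(k)}(A)^{n-1}|A|^k$, and invoke the hypothesis $|A|^{nk-n+1} \ge K_*^8 \T_s(A)^n$ to force the diagonal into the error. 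Taking $n \sim \log|A|/(2\log K_*)$ balances the two polylogarithmic errors and produces $\rho^{(k)}(A) \ge (|A|^{k-1}/K)(1 - O(\log^2 K_*/\log|A|))$.

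For the second half I would reuse the Katz--Koester $+$ growth $+$ H\"older mechanism already set up in \eqref{tmp:sigma_08.04}--\eqref{tmp:07.04_1}. With $B = A$ and the same choice of $n$, inequality \eqref{f:M} transfers to $M^2(A) \ge |A|^2\cdot(1 - O(\log^2 K_*/\log|A|) - \d K K_*)$; the appearance of $K_*$ in the error (rather than $K$, as in Theorem \ref{t:rho,M}) is forced by the fact that the relevant doubling constant for the higher convolution is that of $sA$ and not of $A$. Multiplying the two resulting inequalities yields \eqref{f:rho,M_k}.

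The main obstacle is the iteration in the first half. Unlike the case $k=2$, where $(A\circ A)(0)=|A|$ is automatically dominated by the target $|A|^2/|A-A|$, here the diagonal value $A^{(k)}(0) = \T_s(A)$ is not a priori negligible compared to $\rho^{(k)}(A)$, so the H\"older estimate does not close on its own. The hypothesis $|A|^{nk-n+1} \ge K_*^8 \T_s(A)^n$ is precisely what is needed to absorb this diagonal contribution into the error, and the choice of $n$ must be coordinated with this bound; together with the log-loss coming from the $M^2$ step, this accounts for the constant $14$ in the final error term.
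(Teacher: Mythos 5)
Your overall architecture is the paper's: a lower bound for $\rho^{(k)}(A)$ coming from a H\"older/Cauchy--Schwarz step with the diagonal $\T_s^n(A)$ absorbed via the hypothesis, multiplied by a lower bound for $M^2(A)$ coming from the Katz--Koester $+$ growth $+$ H\"older mechanism applied to $Q=sA\pm A$ (with the $s$-fold analogue $sA\pm A_x\subseteq Q_{\pm x}$). However, there is a genuine gap in your first half. You run H\"older over $\Gr$ against $Y=\supp(A^{(k)})=sA-sA$, which amounts to the paper's inequality \eqref{tmp:15.04_2} with $\eps_{n-1}$ replaced by $1$. The paper instead runs Cauchy--Schwarz over $\Gr^{n-1}$ against the higher difference set $(sA)^{n-1}-\D_{n-1}(sA)=\eps_{n-1}|sD|^{n-1}$, identifying $\sum_x (A^{(k)}(x))^n$ with $\T_s(R^{(n)}_A)$ via Lemma \ref{l:R_T_k}; this yields the extra factor $\eps_{n-1}^{-1/(n-1)}\ge 1$ in the $\rho^{(k)}$ bound. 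That factor is not a luxury: the $M^2$ bound you want to ``transfer'' necessarily carries the loss $\eps_{m-1}^{1/m}$ through the growth inequality \eqref{f:growth} (there $\eps_{m-1}$ is a fixed quantity attached to $A$, not a parameter you may set to $1$, and it can be as small as roughly $K_*^{-(m-1)}$, so $\eps_{m-1}^{1/m}$ can be of order $K_*^{-1}$ rather than $1-o(1)$). In the paper the gain $\eps_{n-1}^{-1/(n-1)}$ and the loss $\eps_{n-1}^{1/n}$ multiply to $\eps_{n-1}^{-1/(n(n-1))}\ge 1$ upon taking $m=n$; in your version only the loss survives, and the product can fall short of the target $|A|^{k+1}/K$ by a power of $K_*$.

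Relatedly, your two intermediate claims are not individually correct as stated: your own H\"older step gives $\rho^{(k)}(A)\ge |A|^{k-1}/K_*\cdot(\cdots)$, with $K_*$ (the doubling of $sA-sA$) rather than $K$ (that of $sA\pm A$) in the denominator; and the second half does not give $M^2(A)\ge |A|^2(1-o(1))$ but rather $M^2(A)\gtrsim (K_*/K)\,\eps_{n-1}^{1/n}K_*^{-2/n}|A|^2$ after comparing the lower bound $\sigma\ge \eps_{m-1}^{1/m}K_*^{1-2/m}|A|^3$ with the Fourier upper bound $\sigma\le \d K^2|A|^3+KM^2|A|$. It is only the product of these two (with the $K_*$'s and the $\eps$'s cancelling) that produces the stated main term $|A|^{k+1}/K$. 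The fix is to replace your one-variable H\"older by the paper's tensorized Cauchy--Schwarz against $(sA)^{n-1}-\D_{n-1}(sA)$ and to carry $\eps_{n-1}$ explicitly through both halves before multiplying.
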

\begin{proof}
     We use 
     %the notation and 
     the argument of the proof of Theorem \ref{t:rho,M}.  
     Let 
     %$k=2s$, 
     $\rho = \rho^{(k)} (A)$, $D=A-A$, $|sD| = K_* |A|$ and put 
\begin{equation*}
%\label{def:eps_n}
    |s(A^{n-1} - \D_{n-1} (A))| = |(sA)^{n-1} - \D_{n-1} (sA)| := \eps_{n-1} |sD|^{n-1} \,,
\end{equation*}
    where 
\[
    \frac{1}{K^n_*} \le \frac{|sA|^n}{|sD|^n} \le \eps_n \le 1 \,.
\]
    Then by the Cauchy--Schwarz inequality and Lemma \ref{l:R_T_k} one has 
\begin{equation}\label{tmp:15.04_2}
    |A|^{nk} \le  |(sA)^{n-1} - \D_{n-1} (sA)| \T_{s} (R^{(n)}_A)
    =
    \eps_{n-1} |sD|^{n-1} \cdot \sum_{x} (A^{(k)} (x))^n
\end{equation}
\begin{equation}\label{tmp:15.04_3}
    \le 
    \eps_{n-1} |sD|^{n-1} (\rho^{n-1} |A|^k + \T^n_s (A)) \,, 
\end{equation}
    and hence
\[
    \rho \ge \eps^{-\frac{1}{n-1}}_{n-1} \cdot \frac{|A|^{k-1}}{K_*} \left( 1-  \frac{\eps^{}_{n-1} K^{n-1}_* \T^n_s (A)}{|A|^{nk-n+1}} \right)^{1/(n-1)} 
    \ge 
    \eps^{-\frac{1}{n-1}}_{n-1} \cdot \frac{|A|^{k-1}}{K_*} \left( 1-  \frac{2\eps^{}_{n-1} K^{n-1}_* \T^n_s (A)}{(n-1)|A|^{nk-n+1}} \right) \,,
\]
    provided $n\le 2^{-1} \log_{K_*} (|A|^{nk-n+1} \T^{-n}_s(A))$. 
    Now let $Q = sA \pm A$, $|Q| = K|A|$. 
    Then the inclusion of Katz--Koester \eqref{f:Katz-Koester} gives us 
    $sA \pm A_x \subseteq Q_{\pm x}$.
    Hence as in the proof of  Theorem \ref{t:rho,M}, we get 
\[
    \sigma:= \sum_x |A_x| |Q_x| \ge \eps^{1/m}_{m-1} |sD|^{1-1/m} \sum_x |A_x|^{1+1/m} \ge \eps^{1/m}_{m-1} K_* |A|^3 K_*^{-2/m} \,.
\]   
On the other hand, there is the upper bound \eqref{tmp:07.04_1} for $\sigma$.
%%and the lower bound \eqref{f:M} for $M(A)$.
%\[
%    \sigma \le 
%\]
%After that we repeat the proof of  Theorem \ref{t:rho,M}, namely, we 
Putting  $$m=n = [2^{-1} \log_{K_*} (|A|^{nk-n+1} \T^{-n}_s(A))] \ge 2$$ 
%and consider 
we can repeat the 
%proof 
argument 
of  the proof of Theorem \ref{t:rho,M}.   
%%    This completes the proof. 
$\hfill\Box$
\end{proof}

\bp

\begin{remark}
    If $\Gr = \Z/p\Z$, where $p$ is a prime number and $|sA-sA| \le \min \{2(p+1)/3, |sA|^{1+o(1)}\}$, then 
    %bound \eqref{tmp:15.04_1} can be improved (see  \cite[Theorem 1]{LS_popularity})  
    it is possible to obtain 
    %a non--trivial 
    another 
    lower bound for the quantity $\rho^{(k)} (A)$, namely, 
\begin{equation}\label{tmp:15.04_2*}
    \rho^{(k)} (A) \ge \frac{2|A|^{k}}{|sA - sA|} \cdot (1-o(1)) \,.
    %\,.
\end{equation}
    Indeed, combine 
    %arguments 
    the 
    %bound 
    estimate 
    $|(sA)^{n-1} - \D_{n-1} (sA)| \le (1/2+o(1))^{n-1} |sD|^{n-1}$ 
    from \cite[Theorem 1, Proposition 1]{LS_popularity} 
    and calculations in \eqref{tmp:15.04_2}, \eqref{tmp:15.04_3}
    %the proof of Theorem \ref{t:rho,M_k} 
    %below
    above. 
    %This 
    %result 
    Estimate \eqref{tmp:15.04_2*} shows that there are some irregularities in the distribution of the quantity $\rho^{(k)} (A)$ and 
    may be of interest in itself.
\end{remark}

\begin{comment}
\begin{example}
    Let $k=4$, $K^2_* \d \to 0$, $\log^2 K_*/ \log |A| \to 0$, and $|3A| = K |A|$.
    Then by \eqref{f:rho,M_k} there is $x\neq 0$ such that 
\[
    M^2 (A) \E(A,A,A,A+x) \ge \frac{|A|^{5}}{K} \cdot (1-o(1)) \,.
\]
\end{example}
\end{comment}

\bp 

In the 
%next 
last 
result,  we make no   assumptions about the size of $A\pm A$, but only use the additive energy of $A$. The resulting estimate  \eqref{f:rho,M_energy} is not as accurate as in Theorems \ref{t:rho,M}, \ref{t:rho,M_l}, \ref{t:rho,M_k} but nevertheless it is optimal 
up to logarithms. 
%{\it up to logarithms}. 

\begin{theorem}
    Let $\Gr$ be a finite abelian group, $A\subseteq \Gr$, 
    $|A|=\d N$, and $\E(A) = |A|^3/K$.  
    Suppose that 
    $|A| \ge 8 K^3$, 
    and 
\begin{equation}\label{cond:rho,M_energy}
    %\d K^3 \log^4 |A|
    %\d^7 L^{52} K^{45} 
    \d^3 L^{28} K^{25}
    \ll 1 \,.
\end{equation}
    Then 
\begin{equation}\label{f:rho,M_energy}
    \rho^7 (A) M^4 (A) \log^7 |A| \gg \frac{|A|^{11}}{K^7}
   \,.
   %\cdot (1-\kappa) \,.
\end{equation}
\label{t:rho,M_energy}
\end{theorem}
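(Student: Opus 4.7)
The strategy is to combine a dyadic pigeonhole decomposition of the correlation function $A\circ A$ with the Fourier--Katz-Koester machinery from the proof of Theorem \ref{t:rho,M}, but now without any a priori bound on $|A-A|$ or $|A+A|$. The proof exploits the factorisation
\[
\rho^{7}(A)\,M^{4}(A) \;=\; \bigl(M^{2}(A)\,\rho(A)\bigr)^{2} \cdot \rho^{5}(A),
\]
so it suffices to establish the two intermediate bounds $\rho \gg |A|/(K\log|A|)$ and $M^{2}\rho \gg |A|^{3}/(K\log|A|)$; multiplying the squares and fifth powers of these respective inequalities then yields $\rho^{7}M^{4}\log^{7}|A|\gg |A|^{11}/K^{7}$ as claimed.

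For the first auxiliary bound, since $|A|\ge 8K^{3}$ we have $|A|^{2}\ll |A|^{3}/K$, so the off-diagonal part $\sum_{x\ne 0}|A_{x}|^{2}$ still amounts to $\gg |A|^{3}/K$. Dyadic pigeonholing produces $\tau\in[1,\rho]$ and a level set $P=P_{\tau}=\{x\ne 0:\tau\le |A_{x}|<2\tau\}$ with $\tau^{2}|P|\gg |A|^{3}/(K\log|A|)$; coupled with the trivial estimate $\tau|P|\le\sum_{x}|A_{x}|\le|A|^{2}$, we deduce $\rho\ge\tau\gg |A|/(K\log|A|)$. For the second bound we mimic the proof of Theorem \ref{t:rho,M} on the popular set $P$. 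Specifically, by the Katz--Koester inclusion \eqref{f:Katz-Koester} and the growth inequality \eqref{f:growth} applied with $D=A-A$ and $M=A_{x}$, we obtain the lower bound
\[
\sum_{x\in P}|A_{x}|\,|(A+A)_{x}| \;\gg\; \varepsilon_{m-1}^{1/m}\,|D|^{\,1-1/m}\,\tau^{\,1+1/m}\,|P|,
\]
while Parseval together with the crude estimate $|\widehat A(\xi)|^{2}\le M^{2}$ for $\xi\ne 0$ applied to $\sum_{\xi}|\widehat A(\xi)|^{4}=N\E(A)$ gives the upper bound
\[
\sum_{x\in P}|A_{x}|\,|(A+A)_{x}| \;\le\; N^{-1}\sum_{\xi}|\widehat A(\xi)|^{6} \;\le\; \delta|A|^{5}+\frac{M^{2}|A|^{3}}{K},
\]
whose first term is absorbed by condition \eqref{cond:rho,M_energy}. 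Choosing $m\asymp \log|A|/\log K$ and using the Cauchy--Schwarz bound $|A-A|\ge K|A|$ balances the growth-factor losses and gives $M^{2}\rho \gg |A|^{3}/(K\log|A|)$.

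The main obstacle is running this argument without any a priori upper bound on $|A-A|$: controlling the factor $\varepsilon_{m-1}^{1/m}|D|^{\,1-1/m}$ when $|D|$ may itself be large forces one to pass through Plünnecke--Ruzsa-type estimates on a Balog--Szemerédi--Gowers subset of $A$, in the spirit of \cite{LS_popularity} and \cite{SS_higher}, and to transfer the resulting Fourier information back to $A$ itself. The multiple polynomial and logarithmic losses incurred along the way are exactly what produce the strong condition $\delta^{3}L^{28}K^{25}\ll 1$, in particular the factor $K^{25}$. Once the two intermediate bounds on $\rho$ and on $M^{2}\rho$ have been secured, the claimed inequality follows immediately from the factorisation displayed at the outset.
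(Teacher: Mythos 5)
Your factorization $\rho^{7}M^{4}=(M^{2}\rho)^{2}\rho^{5}$ and the first intermediate bound are unobjectionable (indeed $\E(A)\le |A|^{2}+\rho(A)|A|^{2}$ already gives $\rho(A)\ge |A|/K-1$ with no logarithmic loss). The entire difficulty is concentrated in your second intermediate claim $M^{2}\rho\gg |A|^{3}/(K\log|A|)$, and that claim is not actually proved; there are three concrete breakdowns. First, your upper bound rests on the identity $\sum_{x}|A_{x}|\,|(A+A)_{x}|=N^{-1}\sum_{\xi}|\FF{A}(\xi)|^{6}$, which is false: by Parseval the left-hand side equals $N^{-1}\sum_{\xi}|\FF{A}(\xi)|^{2}|\FF{S}(\xi)|^{2}$ with $S=A+A$ the \emph{set}, and $\FF{S}\neq \FF{A}^{2}$ (the transform of the indicator of the sumset is not the square of $\FF{A}$). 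To run the Parseval step as in Theorem \ref{t:rho,M} one needs $\sum_{\xi\neq 0}|\FF{S}(\xi)|^{2}=N|S|-|S|^{2}$, i.e.\ control of $|A+A|$, which is precisely what the energy hypothesis does not supply. Second, without an upper bound on $|D|=|A-A|$ the quantity $\eps_{m-1}^{1/m}|D|^{1-1/m}$ is only bounded below by $|A|^{1-1/m}$ (since all one knows is $\eps_{m-1}\ge (|A|/|D|)^{m-1}$), so the Katz--Koester/growth step degenerates to the trivial $|A+A_{x}|\ge |A|$ and the ``balancing'' yields nothing. Third, the proposed rescue via Balog--Szemer\'edi--Gowers cannot be transferred back: passing to $A'\subseteq A$ with controlled doubling would at best give a lower bound on $M(A')\rho(A')$, and while $\rho(A)\ge\rho(A')$, there is no inequality $M(A)\ge M(A')$ — the nontrivial Fourier coefficients of a subset are not dominated by those of the ambient set — so the conclusion does not descend to $A$.

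The paper's proof avoids sumset growth entirely. It pigeonholes on the fractional energy $\E_{10/7}(A)$ to produce a level set $P=\{s:\D<|A_{s}|\le 2\D\}$ with $\D^{10/7}|P|\gg \E_{10/7}(A)/L$, deduces $\D\gg |A|^{7/3}(KL)^{-7/3}\rho^{-4/3}$, and then applies the operator (eigenvalue) inequality $(\D|P|)^{8}\ll |A|^{8}\E_{4}(A)\E(P)$ from \cite{SS_higher}, \cite{sh_new_ineq}. The quantity $\E(P)$ is then related to $\sum_{x}P^{(2)}(x)A^{(4)}(x)=N^{-1}\sum_{\xi}|\FF{P}(\xi)|^{2}|\FF{A}(\xi)|^{4}$, where Parseval legitimately extracts $M^{4}$ (here $|P|$, not $|A+A|$, plays the role of the support), and a two-case analysis finishes: either the $M^{4}$ term dominates and \eqref{f:rho,M_energy} follows, or the diagonal term dominates and one contradicts \eqref{cond:rho,M_energy}. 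The exponents $7,4,11$ and the condition $\d^{3}L^{28}K^{25}\ll 1$ come out of this specific mechanism, not from a factorization into two cleaner inequalities. As it stands, your argument has a genuine gap at its central step.
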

\begin{proof}
    Let $L=\log |A|$, $\rho = \rho (A)$ and $M = M(A)$. 
    Also, let $\D>0$ be a real number and $P = \{ s ~:~ \D < |A_s| \le 2\D \}$ be a set such that $\D^{10/7} |P| \gg \E_{10/7} (A)/L$. 
    It is easy to see that the number $\D$ and the set $P$ exist by the pigeonhole principle. 
    Clearly, we have 
\[
    \E(A) \rho^{-4/7} L^{-1} \ll \E_{10/7} (A) L^{-1} \ll \D^{10/7} |P| \ll |A|^2 \D^{3/7} \,,
\]
    and thus 
\begin{equation}\label{tmp:16.04_1}
    \D \gg \frac{|A|^{7/3}}{(KL)^{7/3} \rho^{4/3}} \,.
\end{equation}
    Computing the eigenvalues of the operator $M(x,y) := P(x-y)A(x)A(y)$  (see, e.g., \cite[Section 8]{SS_higher} or \cite[Theorem 6.3]{sh_new_ineq}), we have 
\[
    (\Delta |P|)^8 \ll |A|^8 \E_4 (A) \E(P)
\]
    and hence using the simple bound $\rho \ge |A|/(2K)$ as well as  the assumption $|A| \ge 8K^3$, we obtain in view of \eqref{F_Par},  \eqref{f:F_svertka} 
\[
    \Delta^{10} |P|^8 \ll |A|^8 \E_4 (A) \sum_{x} P^{(2)} (x) A^{(4)} (x) \le  
    |A|^8  \left( |A|^4 + \rho^3 |A|^2 \right) \cdot  
    N^{-1} \sum_{\xi} |\FF{P} (\xi)|^2 |\FF{A} (\xi)|^4
\]
\begin{equation}\label{tmp:14.04_1}
    \ll 
    \rho^3 |A|^{10} \cdot \left( N^{-1} |P|^2 |A|^4 + M^4 |P| \right) 
\end{equation}
    If the second term in \eqref{tmp:14.04_1} dominates, then thanks to our choice of the set $P$,
    we get 
    %and the H\"older inequality, we get  
\[
    \E^7 (A) \rho^{-4} L^{-7} \ll \E^7_{10/7} (A) L^{-7} \ll \Delta^{10} |P|^7 \ll |A|^{10} \rho^3 M^4
\]
%    as required. 
    and \eqref{f:rho,M_energy} follows. 
    If the first term in \eqref{tmp:14.04_1} is the largest one, then we derive 
\[
   \Delta^{10} |P|^6 \ll \d \E_4 (A) |A|^{11} \ll \d |A|^{11} \rho^{18/7} \E_{10/7}(A) \,,
\]
    and therefore in view of \eqref{tmp:16.04_1} 
\[
%    (|A|/LK)^{10/7} 
    \left( \frac{|A|^{7/3}}{(KL)^{7/3} \rho^{4/3}} \right)^{10/7}
    \E^5_{} (A) \rho^{-20/7}
    \ll 
    \D^{10/7}
    \E^5_{10/7} (A) \ll \d |A|^{11} \rho^{18/7} L^6 \,.
\]
    It follows that 
\[
%    \d^7 L^{52} K^{45} \gg 1 
    \d^3 L^{28} K^{25} \gg 1 
\]
    and this is a contradiction with \eqref{cond:rho,M_energy}. 
    This completes the proof. 
$\hfill\Box$
\end{proof}

\bibliographystyle{abbrv}

\bibliography{bibliography}{}

\end{document}